\def\be#1{\begin{equation}\label{#1}}
\def\ee{\end{equation}}
\def\be#1{\begin{equation}\label{#1}}
 \newcommand{\s}[1]{\left(#1\right)}
 \newcommand{\n}[1]{\left\|#1\right\|}
 \renewcommand{\a}[1]{\left|#1\right|}
\begin{document}

\title*{Heuristic parameter choice in Tikhonov method from minimizers of the quasi-optimality function
}
\author{Toomas Raus and Uno H\"amarik}
\institute{University of Tartu, \email{toomas.raus@ut.ee, uno.hamarik@ut.ee}}
\titlerunning{Heuristic parameter choice in Tikhonov regularization}
\maketitle
\abstract*{We}
\abstract{We consider choice of the regularization parameter in Tikhonov method in the case of the unknown noise level of the data. From known heuristic parameter choice rules often the best results were obtained in the quasi-optimality criterion where the parameter is chosen as the global minimizer of the quasi-optimality function. In some problems this rule fails, the error of the Tikhonov approximation is very large. We prove, that one of the local minimizers of the quasi-optimality function is always a good regularization parameter. We propose an algorithm for finding a proper local minimizer of the quasi-optimality function.}

\section{Introduction}

Let  $A\in{\cal L}(H,F)$ be a linear bounded operator between real Hilbert spaces. We are interested in finding the minimum norm solution $u_*$ of the equation
\begin{equation}\label{mitteek_1}
Au=f_*,\qquad f_*\in R(A).
\end{equation}
The range ${\cal R}(A)$ may be non-closed and the kernel ${\cal N}(A)$ may be non-trivial, so in general this problem is ill-posed. As usually in treatment of ill-posed problems, we assume that instead of exact data $f_*$ noisy data $f\in F$ are given. 
For the solution of the problem $Au=f$ we consider 
Tikhonov method (see \cite{EHN, VaiVer}) where regularized solutions in cases of exact and inexact data have corresponding forms
\[
u^+_{\alpha}=\s{\alpha I+A^*A}^{-1}A^*f_*, \qquad u_{\alpha}=\s{\alpha I+A^*A}^{-1}A^*f
\]
and $\alpha>0$ is the regularization parameter. \\
Denote
\be{e1}
 e_1(\alpha):=\n{u^+_{\alpha}-u_*}+\n{u_{\alpha}-u^+_{\alpha}}.
\ee
Due to the well-known estimate $\n{u_{\alpha}-u^+_{\alpha}} \leq \frac12 {\alpha}^{-1/2}\n{f-f_*}$ (see \cite{EHN, VaiVer}) the error
$\n{u_{\alpha}-u_*}$ can be estimated by
\be{err-est}
\n{u_{\alpha}-u_*} \leq e_1(\alpha) \leq e_2(\alpha,\n{f-f_*}):=\n{u^+_{\alpha}-u_*}+\frac{1}{2 \sqrt \alpha} \n{f-f_*}.
\ee 
We consider choice of the regularization parameter if the noise level for $\n{f-f_*}$ is unknown. The parameter choice rules which do not use the noise level information are called heuristic rules.  Many heuristic rules are proposed, well known are the quasi-optimality criterion \cite{BaKi8, BaKi9, BaRe, HPR09, Ki, Ki13, KiNe, Ne, TGK}, L-curve rule \cite{Ha92, Ha94}, GCV-rule \cite{GHW}, Hanke-Raus rule \cite{HR96}, Reginska's rule \cite{Re}, about other rules see \cite{HoReRo, JiLo, LuMa, Pa}. 
Heuristic rules are numerically compared in \cite{BaLu, HPR09, HoReRo, Pa}. It is also well known that it is not possible to construct heuristic rule guaranteeing convergence $\n{u_{\alpha}-u_*} \to 0$ as the noise level goes to zero (see \cite{Bak}). Nevertheless the heuristic rules give good results in many problems. The problem is that all these rules may fail in some problems and without additional information about the solution, it is difficult to decide, is the obtained parameter reliable or not.

In this article we propose a new strategy for heuristic parameter choice. It is based on analysis of local minimizers of the function $\psi_Q(\alpha)=\alpha\n{\frac{du_\alpha}{d\alpha}}$, the global minimizer of which on certain interval $[\alpha_M, \alpha_0]$ is taken for parameter in the quasi-optimality criterion. We will call the parameter $\alpha_R$ in arbitrary rule R as pseudooptimal, if $$\n{u_{\alpha_R}-u_*} \leq \text{const} \quad \min_{\alpha>0} e_1(\alpha)$$ and we show that at least one of local minimizers  of $\psi_Q(\alpha)$ has this property. Our approach enables to replace the search of the parameter from the interval $[\alpha_M, \alpha_0]$ by search of the proper parameter from the set $L_{min}$ of the local minimizers of the function $\psi_Q(\alpha)$.
We consider also the possibility to restrict the set $L_{min}$ to its subset $L^*_{min}$ still containing at least one pseudooptimal parameter. It occurs that in many problems the restricted set $L^*_{min}$ contains only one local minimizer and this is the pseudooptimal parameter. If the set $L^*_{min}$ contains several local minimizers, we consider different algorithms for choice of the proper parameter from the set 
$L^*_{min}$. 

The plan of this paper is as follows. In Section 2 we consider known rules for choice of the regularization parameter, both in case of known and unknown noise level. We will characterize distinctive properties of considered heuristic rules presenting results of numerical experiments on test problems \cite{Ha94}. In Section 3 we consider the set $L_{min}$ of local minimizers of the function $\psi_Q(\alpha)$ and prove that this set contains at least one pseudooptimal parameter. In Section 4 we show how to restrict the set $L_{min}$ to the set $L^*_{min}$ still containing at least one pseudooptimal parameter.
In Section 5 we consider the case if the set $L^*_{min}$ contains  several elements and we propose some algorithms for finding proper pseudooptimal parameter. In all sections theoretical results and proposed algorithms are illustrated by results of numerical experiments on test problems \cite{Ha94}.
\\

\section{Rules for the choice of the regularization parameter}

An important problem, when applying regularization methods, is the proper choice of the regularization parameter. The choice of the parameter  depends on the information about the noise level. 

\subsection{Parameter choice in the case of known noise level}
In case of known noise level $\delta, \n{f-f_*} \leq \delta$ we use one of so-called  $\delta$-rules, where certain functional $d(\alpha)$ and constants $b_2 \geq b_1 \geq b_0$ ($b_0$ depends on $d(\alpha)$) are chosen and such regularization parameter $\alpha(\delta)$ is chosen which satisfies 
$b_1 \delta \leq d(\alpha) \leq b_2 \delta. $ \\
1) Discrepancy principle (DP) \cite{Mo, VaiVer}: $$b_1 \delta \leq \n{Au_{\alpha}-f} \leq b_2 \delta,  \quad b_1 \geq 1.$$ 
2) Modified discrepancy principle (Raus-Gfrerer rule) \cite{Gf, Ra85}: 
$$b_1 \delta \leq \n{B_{\alpha}\s{Au_{\alpha}-f}} \leq b_2 \delta, \quad B_{\alpha}:=\alpha^{1/2} \s{\alpha I+AA^*}^{-1/2}, \quad b_1 \geq 1
 .$$ 
3) Monotone error rule (ME-rule) \cite{HKPRT, TaHa}: 
$$b_1 \delta \leq  \frac{\n{B_{\alpha}\s{Au_{\alpha}-f}}^2}{\n{B^2_{\alpha}\s{Au_{\alpha}-f}}}\leq b_2 \delta,  \quad b_1 \geq 1.$$
The name of this rule is justified by the fact that
the chosen parameter $\alpha_{\text{ME}}$ satisfies
$$\frac{d}{d\alpha} \| u_{\alpha}-u_*\| >0 \qquad\forall \alpha \in (\alpha_{\text{ME}}, \infty).$$
Therefore $\alpha_{\text{ME}} \geq \alpha_{opt}:=\text{argmin} \|u_\alpha -u_*\|$ and $b_1=b_2=1$ are recommended. 

4) Monotone error rule with post-estimation (MEe-rule) \cite{HPR09, HPR11, HPR12, Pa, RH09a}. The inequality $\alpha_{ME} \geq \alpha_{opt}$ suggests to use somewhat smaller parameter than $\alpha_{ME}$. Extensive numerical experiments suggest to take $b_1=b_2=1$, to compute $\alpha_{\text{ME}}$ and to use the post-estimated parameter $\alpha_{MEe} :=0.4 \alpha_{ME}$. Then typically $\|u_{\alpha_\text{MEe}} - u_*\| \approx 0.8 \|u_{\alpha_\text{ME}} - u_*\|$. To our best knowledge in case of exact noise level this MEe-rule gives typically best results from all known rules for the parameter choice.

5) Rule R1 \cite {Ra92}: Let $b_2 \geq b_1 \geq 0.325$. Let $d(\alpha):=
\alpha^{-1/2}\n{A^*B^2_{\alpha}\s{Au_{\alpha}-f}}$. Choose $\alpha(\delta)$ such that $d(\alpha(\delta)) 
\geq b_1 \delta$, but $d(\alpha) \leq b_2 \delta$ for all $\alpha \leq \alpha(\delta)$.

Note that 
$$B^2_{\alpha}\s{Au_{\alpha}-f}=Au_{2,\alpha}-f, \qquad u_{2,\alpha}=\s{\alpha I+A^*A}^{-1}(\alpha u_\alpha+A^*f),$$
where $u_{2,\alpha}$ is the 2-iterated Tikhonov approximation.

6) Balancing principle \cite{BaLu, HR09, Pa, PeSc}. This rule has different forms in different papers, in \cite{HR09} the form 
$$b_1 \delta \leq \frac{\sqrt{\alpha}\sqrt{q} \|u_\alpha - u_{\alpha/q}\|}{1-q} \leq b_2 \delta, \quad b_1\geq \frac{3\sqrt 6}{16} \approx 0.459.$$
Typically balancing principle is implemented by computing a sequence of Tikhonov approximations, but in case of a smooth solution much better approximation than single Tikhonov approximation is simple linear combination of Tikhonov approximations with different parameters - the extrapolated approximation (see \cite{HPR10, HR09, Pa}).
See \cite{RH09b} about effective numerical realization of rules 1)-6).

The last five rules are weakly quasioptimal rules  (see \cite{RH07}) for Tikhonov method.
If $\n{f-f_*} \leq \delta$, then we have the error estimate (see (\ref{err-est}))
\[
\n{u_{\alpha(\delta)}-u_*} \leq C(b_1, b_2) \inf_{\alpha>0} e_2(\alpha,\delta)=C(b_1, b_2) \inf_{\alpha>0} \left[\n{u^+_{\alpha}-u_*}+\frac{1}{2 \sqrt \alpha}\delta\right].
\]

\subsection {Parameter choice in the case of unknown noise level}
 If the noise level is unknown, then, as shown by Bakushinskii [1], no rule for choosing the regularization parameter can guarantee the convergence of the regularized solution to the exact one as noise level $\n{f-f_*}$ goes to zero. 
Nevertheless, some heuristic rules are rather popular, because they often work well in practice and because in applied ill-posed problems the exact noise level is often unknown. \\
 A classical heuristic rule is the quasi-optimality criterion. In Tikhonov method it chooses  $\alpha=\alpha_Q$ as the global minimizer of the function
\be{qf}
\psi_Q(\alpha)=\alpha\n{\frac{du_\alpha}{d\alpha}}={\alpha}^{-1}\n{A^*B^2_{\alpha}\s{Au_{\alpha}-f}}.
\ee
In case of the discrete version of the  quasi-optimality criterion we choose  $\alpha=\alpha_{QD}$ as the global minimizer of the function $\n{u_{\alpha}-u_{q\alpha}}$, where $0<q<1$. \\
The Hanke-Raus rule finds the regularization parameter  $\alpha=\alpha_{HR}$ as the global minimizer of the function
\[
\psi_{HR}(\alpha)={\alpha}^{-1/2}\n{B_{\alpha}\s{Au_{\alpha}-f}}.
\]
In practice the L-curve rule is popular. This rule uses the graph with log-log scale, on $x$-axis $\n{Au_{\alpha}-f}$ and on $y$-axis $\n{u_{\alpha}}$. The name of the rule is justified by fact that often the points $\s{\n{Au_{\alpha}-f},\n{u_{\alpha}}}$ have shape similar to the letter L and parameter $\alpha_L$ which corresponds to the "corner point" is often a good parameter. In the literature several concrete rules for choice of the 'corner point'  are proposed. One natural rule is proposed in \cite{Re}
where global minimum point of the function
 \[
\psi_{RE}(\alpha)=\n{Au_{\alpha}-f}\n{u_{\alpha}}^{\tau}, \qquad \tau \geq 1.
\]
In numerical examples below we used this rule with $\tau=1$.\\

Some heuristic rules choose the regularization parameter as global minimizer of a function $\alpha^{-1/2} d(\delta)$ with function $d(\delta)$ from some $\delta$-rule 1)-6) from Section 2.1 (see \cite{HPR09}). For example, the quasi-optimality criterion and Hanke-Raus rule use functions $d(\delta)$ from the rules 5) (R1) and 2) (modified discrepancy principle) respectively. In \cite{HPR09} heuristic counterpart of rule 3) (ME-rule) is also studied. We call this rule as HME-rule (H means "heuristic counterpart"), here 
the regularization parameter  $\alpha=\alpha_{HME}$ is chosen as the global minimizer of the function
\[
\psi_{HME}(\alpha)={\alpha}^{-1/2}\frac{\n{B_{\alpha}\s{Au_{\alpha}-f}}^2}{\n{B^2_{\alpha}\s{Au_{\alpha}-f}}}.
\]  
In the following we will find the regularization parameter from the set of parameters
\be{omega}
\Omega=\left\{\alpha_j: \alpha_j=q\alpha_{j-1}, \quad j=1,2,...,M, \quad 0<q<1  \right\}
\ee
where $\alpha_0, q, \alpha_M$ are given.  
In the case if in the discretized problem the minimal eigenvalue $\lambda_{min}$ of the matrix
 $A^TA$ is larger than $\alpha_M$, the heuristic rules above choose parameter
 $\alpha_M$, which is generally not a good parameter. The works \cite{Ki13, KiNe, Ne} propose to search the global minimum of the function  $\psi_Q(\alpha)$ in the interval $[\max{\s{\alpha_M,\lambda_{min}}},\alpha_0]$. 
We use basically the same approach but consider also local minimizers. 

We say that the discretized problem
$Au=f$ do not need regularization if
$$e_1(\lambda_{min})=\min_{\alpha \in \Omega, \alpha \geq \lambda_{min}} e_1(\alpha).$$ 
If $\lambda_{min} > \alpha_M$ and the discretized problem do not need regularization then $\alpha_M$ is the proper parameter while then it is easy to show the error estimate
$$ \n{u_{\alpha_M}-u_*} \leq e_1(\alpha_M) \leq 2 \min_{\alpha \in \Omega} e_1(\alpha).$$  Searching the parameter from the interval $[\max{\s{\alpha_M,\lambda_{min}}},\alpha_0]$ means the a priori assumption that the discretized problem needs regularization. Note that if $\lambda_{min} > \alpha_M$, then in general case it is not possible to decide (without additional information about solution or about noise of the data), needs the discretized problem regularization or not. In practice in the case
 $\lambda_{min} > \alpha_M$ it is meaningful to choose the regularization parameter $\alpha_H$ from the interval $[\lambda_{min}, \alpha_0]$, while then our parameter is not too small. If we have some information about solution or about the noise then this information may help to decide, is $\alpha_H$ or $\alpha_M$ the better final parameter.

Our tests are performed on the well-known set of test problems by Hansen \cite{Ha94}.
In all tests we used discretization parameter $n=100$. \\
Since the perfomance of rules generally depends on the smoothness $p$ of the exact solution in (\ref{mitteek_1}), we complemented the standard solutions $u_*$ of (now discrete) test problems with smoothened solutions $|A|^p u_*, |A|:=(A^*A)^{1/2}, p=2$ (computing the right-hand side as $A(|A|^p u_*)$). After discretization all problems were scaled (normalized) in such a way that the Euclidean norms of the operator and the right-hand side were 1. On the base of exact data $f_*$ we formed the noisy data $f$, where $\n{f-f_*}$ has values $10^{-1}, 10^{-2},...,10^{-6}$, $f-f_*$ has normal distribution and the components of the noise were uncorrelated. We generated 20 noise vectors and used these vectors in all problems. We search the regularization parameter from the set 
$\Omega$, where $\alpha_0=1, q=0.95 $ and $M$ is chosen so that  $\alpha_M \geq 10^{-18} > \alpha_{M+1}$. \\
Since in model equations the exact solution is known, it is possible to find the regularization parameter $\alpha_*$, which gives the smallest error in the set $\Omega$ . 
For every rule R the error ratio $$E=\frac{\n{u_{\alpha_R}-u_*}}{\n{u_{\alpha_*}-u_*}}= \frac{\n{u_{\alpha_R}-u_*}}{\min_{\alpha \in \Omega} \n{u_{\alpha}-u_*}}$$
describes the performance of the rule R on this particular problem. To compare the rules or to present their properties, the following tables show averages A and maximums M of these error ratios over various parameters of the data set (problems 1-10, smoothness indices $p$, noise levels $\delta$).  We say that the heuristic rule fails if the error ratio $E>100$. 
Table 1 contains the results of the previous heuristic rules by problems. \\

\begin{table}
\caption{Averages of error ratios E and fail \% (in parenthesis) for heuristic rules, $p=0$}
\label{tab:1}       
%
%
\addtolength\tabcolsep{9.8pt}%
\begin{tabular}{lccccc}
\hline\noalign{\smallskip}
Problem & $\Lambda$ & Quasiopt. & HR  & HME & Reginska \\
\noalign{\smallskip}\svhline\noalign{\smallskip}

Baart & 1666  & 1.54 & 2.58 & 2.52 & 1.32\\
Deriv2 & 16 &  1.08 & 2.07 & 1.72 & 35.19 (3.3)\\
Foxgood & 210  &  1.57 & 8.36 & 7.71 & 36.94 (10.8) \\
Gravity & 4  &  1.13 & 2.66 & 2.32 & 20.49 (0.8)\\
Heat & $4*10^{29}$  &  $>100$ (66.7)& 1.64 & 1.48 & 23.40 (4.2)\\
Ilaplace & 16  &  1.24 & 1.94 & 1.81 & 1.66\\
Phillips & 9  &  1.09 & 2.27 & 1.91 & $>100$ (44.2)\\
Shaw & 290  &  1.43 & 2.34 & 2.23 & 1.80\\
Spikes & 1529  &  1.01 & 1.03 & 1.03 & 1.01\\
Wing & 9219  &  1.40 & 1.51 & 1.51 & 1.18\\
\noalign{\smallskip}\hline\noalign{\smallskip}
\end{tabular}
\end{table}

This table shows that the quasi-optimality principle succeeds to choose a proper parameter in almost all problems, except the problem \emph{heat} where this principle fails in 66.7 \% cases. In contrast to other problems in problem \emph{heat} the maximal ratio
$\Lambda=\max_{ \lambda_k>\max{(\alpha_M,\lambda_n)}} \lambda_k/\lambda_{k+1} $
of consecutive eigenvalues $\lambda_1 \geq \lambda_2 \geq ... \geq \lambda_n$ of the matrix $A^TA$ in the interval 
$[\max{(\alpha_M,\lambda_n)},1]$ is much larger than in other problems. It means that location of the eigenvalues  in the interval $[\max{(\alpha_M,\lambda_n)},1]$ is sparse. 

The rules of Hanke-Raus and HME did not fail in test problems, but the error of the approximate solution is in most problems approximately two times larger than for parameter chosen by the quasi-optimality principle.
The problem in these rules is that they choose too large parameter comparing with the optimal parameter.
Reginska's rule may fail in many problems but it has the advantage that it works better than other rules if the noise level is large. The Reginska's rule has average of error ratios of all problems $E=1.46$ and   $E=3.23$ in cases $\n{f-f_*}=10^{-1}$ and $\n{f-f_*}=10^{-2}$ respectively, the Hanke-Raus rule has corresponding averages $E=3.41$  ja $E=3.50$.

By implementing of all these rules the problem is that without additional information in general case it is difficult  to decide, is the obtained parameter good or not. In the following we propose a methodology enabling in many cases to assert that obtained parameter is pseudooptimal.

\section {Local minimum points of the function  $\psi_{Q}(\alpha)$}

In the following we investigate the function
 $\psi_{Q}(\alpha)$ in (\ref{qf}) and show that at least one local minimizer of this function is the pseudooptimal parameter. 
We need some preliminary results. 

\begin{lemma}
The function  $\psi_{Q}(\alpha)$ has the estimate (see (\ref{e1}) for notation $e_1(\alpha)$)
\begin{equation} \label{eq3}
\psi_{Q}(\alpha) \leq e_1(\alpha).
\end{equation}
\end{lemma}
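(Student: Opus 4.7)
The plan is to split $\alpha\,du_\alpha/d\alpha$ according to the decomposition $u_\alpha = u_\alpha^+ + (u_\alpha - u_\alpha^+)$, estimate each piece separately against the corresponding term in $e_1(\alpha)$, and then apply the triangle inequality.

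First I would compute the derivative. Since $u_\alpha = (\alpha I + A^*A)^{-1}A^*f$ and $u_\alpha^+ = (\alpha I + A^*A)^{-1}A^*f_*$, differentiating gives
\[
\alpha\,\frac{du_\alpha^+}{d\alpha} = -\alpha(\alpha I + A^*A)^{-2}A^*f_*, \qquad \alpha\,\frac{d(u_\alpha - u_\alpha^+)}{d\alpha} = -\alpha(\alpha I + A^*A)^{-1}(u_\alpha - u_\alpha^+).
\]
By linearity, $\alpha\,du_\alpha/d\alpha$ is the sum of these two terms, so by the triangle inequality it suffices to bound each by the corresponding summand of $e_1(\alpha)$.

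For the noise part the bound is immediate from the operator norm estimate $\|\alpha(\alpha I + A^*A)^{-1}\| = \sup_\sigma \alpha/(\alpha+\sigma^2) \leq 1$, which yields $\|\alpha\,d(u_\alpha - u_\alpha^+)/d\alpha\| \leq \|u_\alpha - u_\alpha^+\|$.

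For the noise-free part I would use spectral calculus with respect to $A^*A$ (or equivalently an SVD expansion $u_* = \sum_k c_k e_k$ with singular values $\sigma_k$). Using $f_* = Au_*$, one has $u_\alpha^+ - u_* = -\alpha(\alpha I + A^*A)^{-1}u_*$ and $\alpha\,du_\alpha^+/d\alpha = -\alpha(\alpha I + A^*A)^{-1}A^*A(\alpha I + A^*A)^{-1}u_*$, so componentwise
\[
\|u_\alpha^+ - u_*\|^2 = \sum_k \frac{\alpha^2}{(\alpha+\sigma_k^2)^2}c_k^2, \qquad \bigl\|\alpha\tfrac{du_\alpha^+}{d\alpha}\bigr\|^2 = \sum_k \frac{\alpha^2\sigma_k^4}{(\alpha+\sigma_k^2)^4}c_k^2.
\]
Since $\sigma_k^2/(\alpha+\sigma_k^2) \leq 1$, each summand on the right is dominated by the corresponding summand on the left, giving $\|\alpha\,du_\alpha^+/d\alpha\| \leq \|u_\alpha^+ - u_*\|$. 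Adding the two bounds establishes $\psi_Q(\alpha) \leq e_1(\alpha)$.

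I do not anticipate a serious obstacle: the argument is a routine spectral-calculus comparison together with the elementary operator bound $\|\alpha(\alpha I + A^*A)^{-1}\| \leq 1$. The only point requiring mild care is checking that the minimum-norm solution $u_*$ lies in $\mathcal{N}(A)^\perp = \overline{\mathcal{R}(A^*)}$, so that the spectral expansion above captures all of $u_*$; this is built into the choice of $u_*$ as the minimum norm solution.
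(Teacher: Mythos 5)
Your proof is correct and follows essentially the same route as the paper: both reduce $\psi_Q(\alpha)$ to the expression $\alpha\left(\alpha I+A^*A\right)^{-2}A^*f$, split $f=Au_*+(f-f_*)$, and bound the two pieces by $\left\|u_\alpha^+-u_*\right\|$ and $\left\|u_\alpha-u_\alpha^+\right\|$ using the elementary bounds $\left\|A^*A(\alpha I+A^*A)^{-1}\right\|\le 1$ and $\left\|\alpha(\alpha I+A^*A)^{-1}\right\|\le 1$. Phrasing the first bound via the SVD rather than an operator-norm inequality is only a cosmetic difference.
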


\begin{proof}
The following equalities hold:
$$Au_{\alpha}-f=A\s{\alpha I+A^*A}^{-1}A^*f -f =-\alpha \s{\alpha I+AA^*}^{-1}f,
$$
\begin{eqnarray}\label{qfe}
(\alpha)^{-1}A^*B^2_{\alpha}\s{Au_{\alpha}-f}=-\alpha A^*\s{\alpha I+AA^*}^{-2}f=-\alpha \s{\alpha I+A^*A}^{-2}A^*f=\\ \nonumber
=\alpha A^*A \s{\alpha I+A^*A}^{-2}u_* + \alpha \s{\alpha I+A^*A} ^{-2}A^*(f-f_*).
\end{eqnarray}
Now the inequality (\ref{eq3}) follows from (\ref{qf}) and the inequalities 
\[
\alpha \n{A^*A\s{\alpha I+A^*A}^{-2}u_*} \leq \alpha \n{\s{\alpha I+A^*A}^{-1}u_*}=\n{u^+_{\alpha}-u_*}.
\]
\[
\alpha \n{\s{\alpha I+A^*A} ^{-2}A^*(f-f_*)} \leq   \n{\s{\alpha I+A^*A} ^{-1}A^*(f-f_*)}=\n{u_{\alpha}-u^+_{\alpha}}.
\]
\end{proof}
\begin{remark} Note that $\lim_{\alpha \rightarrow \infty} \psi_{Q}(\alpha) =0 $, but  $\lim_{\alpha \rightarrow \infty} e_1(\alpha) =\n{u_*}$. 
Therefore in the case of too large $\alpha_0$ this $\alpha_0$ may be global (or local) minimizer of the function $\psi_{Q}(\alpha)$. We recommend to take 
$\alpha_0=c\n{A^*A}, c \leq 1$ or to minimize the function  $\tilde{\psi}_{Q}(\alpha):=(1+\alpha / \n{A^*A}) \psi_{Q}(\alpha)$
instead of $\psi_{Q}(\alpha)$. Due to limit
$\lim_{\alpha \rightarrow 0} (1+\alpha / \n{A^*A})=1$ the function $\tilde{\psi}_{Q}(\alpha)$ approximately satisfies  (\ref{eq3}).
\end{remark}
\begin{lemma}
Denote $\psi_{QD}(\alpha)=\s{1-q}^{-1}\n{u_{\alpha}-u_{q\alpha}}$. Then it holds
\[
\psi_{Q}(\alpha) \leq \psi_{QD}(\alpha) \leq q^{-1}\psi_{Q}(q\alpha).
\]
\end{lemma}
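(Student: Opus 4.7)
The plan is to reduce both inequalities to explicit resolvent identities combined with the elementary operator monotonicity of $\beta\mapsto\s{\beta I+A^*A}^{-1}$.

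First, from $u_\alpha=\s{\alpha I+A^*A}^{-1}A^*f$ I would differentiate in $\alpha$ to get
\[
\frac{du_\alpha}{d\alpha}=-\s{\alpha I+A^*A}^{-2}A^*f,
\]
and use the standard resolvent identity
\[
\s{\alpha I+A^*A}^{-1}-\s{q\alpha I+A^*A}^{-1}=-\s{1-q}\alpha\s{\alpha I+A^*A}^{-1}\s{q\alpha I+A^*A}^{-1}
\]
to obtain
\[
u_\alpha-u_{q\alpha}=-\s{1-q}\alpha\s{\alpha I+A^*A}^{-1}\s{q\alpha I+A^*A}^{-1}A^*f.
\]
Consequently
\[
\psi_Q(\alpha)=\alpha\n{\s{\alpha I+A^*A}^{-2}A^*f},\qquad
\psi_{QD}(\alpha)=\alpha\n{\s{\alpha I+A^*A}^{-1}\s{q\alpha I+A^*A}^{-1}A^*f}.
\]

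The key observation is that for $0<q<1$ and every $v\in H$, the spectral calculus for the nonnegative self-adjoint operator $A^*A$ gives
\[
\n{\s{q\alpha I+A^*A}^{-1}v}\geq\n{\s{\alpha I+A^*A}^{-1}v},
\]
since $1/(q\alpha+\lambda)\geq 1/(\alpha+\lambda)$ for every $\lambda\geq 0$. Applying this with $v=\s{\alpha I+A^*A}^{-1}A^*f$ immediately yields the left inequality $\psi_Q(\alpha)\leq\psi_{QD}(\alpha)$. For the right inequality, I use that both resolvents are functions of $A^*A$ and therefore commute, so I may rewrite $\psi_{QD}(\alpha)=\alpha\n{\s{\alpha I+A^*A}^{-1}w}$ with $w=\s{q\alpha I+A^*A}^{-1}A^*f$; the same monotonicity bound then gives $\psi_{QD}(\alpha)\leq\alpha\n{\s{q\alpha I+A^*A}^{-2}A^*f}$, and rewriting $\alpha=q^{-1}(q\alpha)$ on the right hand side turns this into $q^{-1}\psi_Q(q\alpha)$.

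I do not expect any real obstacle: the whole argument reduces to the resolvent identity plus a single scalar inequality handled under the spectral calculus. The only things needing minor care are the bookkeeping of the scalar prefactors $\alpha$, $q\alpha$, and $1-q$, and the commutativity of the two resolvents, which is immediate since both are continuous functions of $A^*A$.
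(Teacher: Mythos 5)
Your proof is correct and follows essentially the same route as the paper: both reduce $\psi_Q(\alpha)$ and $\psi_{QD}(\alpha)$ to the explicit forms $\alpha\n{\s{\alpha I+A^*A}^{-2}A^*f}$ and $\alpha\n{\s{\alpha I+A^*A}^{-1}\s{q\alpha I+A^*A}^{-1}A^*f}$ via the resolvent identity, and then apply the scalar monotonicity $1/(\alpha+\lambda)\leq 1/(q\alpha+\lambda)$ under the functional calculus, once for each inequality. The only cosmetic difference is that you obtain the first expression by differentiating $u_\alpha$ whereas the paper reads it off from its identity (\ref{qfe}); these are equivalent by the definition (\ref{qf}).
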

\begin{proof} We use the equalities (\ref{qfe}) and
\begin{eqnarray*}
u_{\alpha}-u_{q\alpha}=\s{\alpha I+A^*A}^{-1}A^*f-\s{q\alpha I+A^*A}^{-1}A^*f=\\
\s{q-1}\alpha \s{\alpha I+A^*A}^{-1}\s{q\alpha I+A^*A}^{-1} A^*f.
\end{eqnarray*}
The following inequalities prove the lemma: 
$$\psi_{Q}(\alpha)=\alpha  \n{\s{\alpha I+A^*A}^{-2}A^*f} \leq \alpha  \n{\s{\alpha I+A^*A}^{-1} \s{q\alpha I+A^*A}^{-1} A^*f} =$$
$$=\psi_{QD}(\alpha) \leq \alpha  \n{\s{q\alpha I+A^*A}^{-2}A^*f}=q^{-1}\psi_{Q}(q\alpha).$$
\end{proof}
In the following we define the local minimum points of the function  $\psi_Q(\alpha) $ on the set $\Omega$ (see (\ref{omega})).

We say that the parameter $\alpha_k , 0 \leq k \leq M-1 $ is the local minimum point of the sequence $\psi_Q(\alpha_k) $, if  $\psi_Q(\alpha_k) <\psi_Q(\alpha_{k+1})$ and in case $k>0 $ there exists index $j \geq 1$ such, that  $\psi_Q(\alpha_k) =\psi_Q(\alpha_{k-1}) =...=\psi_Q(\alpha_{k-j+1}) <\psi_Q(\alpha_{k-j})$.
 The parameter $\alpha_M$ is the local minimum point if there exists index $j \geq 1$ so, that  $$\psi_Q(\alpha_M) =\psi_Q(\alpha_{M-1}) =...=\psi_Q(\alpha_{M-j+1}) <\psi_Q(\alpha_{M-j}).$$ Let the number of the local minimum points be $K$ and denote
\[
L_{min}=\left\{\alpha^{(k)}_{min}: \alpha^{(1)}_{min}>\alpha^{(2)}_{min}>...>\alpha^{(K)}_{min}\right\}.
\]
The parameter $\alpha_k , 0 < k < M $ is the local maximum point of the sequence $\psi_Q(\alpha_k) $ if  $\psi_Q(\alpha_k) >\psi_Q(\alpha_{k+1})$ and there exists index $j \geq 1$ so, that  $$\psi_Q(\alpha_k) =\psi_Q(\alpha_{k-1})  =...=\psi_Q(\alpha_{k-j+1}) >\psi_Q(\alpha_{k-j}).$$
We denote by $\alpha^{(k)}_{max}$ the local maximum point between the local minimum points  $\alpha^{(k+1)}_{min}$ and  $\alpha^{(k)}_{min},  1 \leq k \leq K-1$. Denote $\alpha^{(0)}_{max}=\alpha_0, \alpha^{(K)}_{max}=\alpha_M $. Then by the construction
\[
 \alpha^{(0)}_{max} \geq \alpha^{(1)}_{min}> \alpha^{(1)}_{max}>...>\alpha^{(K-1)}_{max}>\alpha^{(K)}_{min} \geq \alpha^{(K)}_{max}.
\]
\begin{theorem}
The following estimates hold for the local minimum points of the function $\psi_Q(\alpha) $:
\begin{enumerate}
\item
\begin{equation} \label{eq6} 
 \min_{\alpha \in L_{min}} \n{u_{\alpha}-u_*} \leq q^{-1}C \min_{\alpha_M \leq \alpha \leq \alpha_0} e_1(\alpha),   
\end{equation} 
where 
\begin{equation*} 
C:=1+\max_{1 \leq k \leq K} \max_{\alpha_j  \in \Omega, \alpha^{(k)}_{max} \leq \alpha_j \leq \alpha^{(k-1)}_{max}} T\s{\alpha^{(k)}_{min}, \alpha_j} \leq 1+c_q\ln \s{\frac{\alpha_0}{\alpha_M}}, 
\end{equation*} 
$$T(\alpha, \beta):=\frac{\n{u_{\alpha}-u_{\beta}}}{\psi_Q(\beta)}, \qquad c_q:=\s{q^{-1}-1}/\ln{q^{-1}} \to 1\text{  if  } q \to 1.$$

\item Let $u_*=\a{A}^pv$, $\n{v}\leq\rho$, $p>0$ and $\alpha_0=1$. 
If $\delta_0:=\sqrt \alpha_M \leq \n{f-f_*}$, then 
\begin{equation} \label{eq7}
\min_{\alpha \in L_{min}} \n{u_{\alpha}-u_*} \leq   c_p\ln \frac{\n{f-f_*}}{\delta_0} \rho^\frac1{p+1}\a{\ln{\n{f-f_*}}}\n{f-f_*}^\frac p{p+1}, 0<p\leq 2.
\end{equation} 
\end{enumerate}
\end{theorem}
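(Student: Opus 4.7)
The plan for (\ref{eq6}) is to pick a grid point $\alpha_j \in \Omega$ close to the continuous minimizer $\alpha^* \in [\alpha_M,\alpha_0]$ of $e_1$, and then to carry the estimate over to the local minimum of $\psi_Q$ in whose basin $\alpha_j$ lies. Taking $\alpha_j$ to be the smallest element of $\Omega$ with $\alpha_j \geq \alpha^*$ gives $\alpha^* \leq \alpha_j \leq \alpha^*/q$. A short spectral computation---using that $\alpha^2/(\alpha+\lambda)^2$ is monotone increasing in $\alpha$ and satisfies $\alpha_j^2/(\alpha_j+\lambda)^2 \leq q^{-2}\,\alpha^{*2}/(\alpha^*+\lambda)^2$ on the spectrum of $A^*A$, together with the monotone decrease of $\n{u_\alpha - u^+_\alpha}$ in $\alpha$---yields $e_1(\alpha_j) \leq q^{-1}e_1(\alpha^*)$.

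Let $\alpha^{(k)}_{min}$ be the local minimum whose basin $[\alpha^{(k)}_{max},\alpha^{(k-1)}_{max}]$ contains $\alpha_j$; by definition $\psi_Q(\alpha^{(k)}_{min}) \leq \psi_Q(\alpha_j)$. Combining the triangle inequality with the definition of $T$, the bound $\n{u_{\alpha_j}-u_*} \leq e_1(\alpha_j)$, and Lemma 1 gives
\[
\n{u_{\alpha^{(k)}_{min}}-u_*} \leq T(\alpha^{(k)}_{min},\alpha_j)\psi_Q(\alpha_j) + \n{u_{\alpha_j}-u_*} \leq (1+T(\alpha^{(k)}_{min},\alpha_j))\,e_1(\alpha_j) \leq q^{-1}C\,e_1(\alpha^*).
\]
For the bound $C \leq 1 + c_q \ln(\alpha_0/\alpha_M)$, I would apply Lemma 2 on each grid step: $\n{u_{\alpha_i}-u_{q\alpha_i}} \leq (1-q)q^{-1}\psi_Q(q\alpha_i)$. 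Telescoping along the grid from $\alpha_j$ down to $\alpha^{(k)}_{min}$, and using unimodality of $\psi_Q$ on each basin (so $\psi_Q(\alpha_i) \leq \psi_Q(\alpha_j)$ for grid points between them), produces $T(\alpha^{(k)}_{min},\alpha_j) \leq c_q \ln(\alpha_j/\alpha^{(k)}_{min})$, hence the stated bound on $C$.

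For (\ref{eq7}) under the source condition $u_* = \a{A}^p v$ with $\n{v} \leq \rho$, the classical Tikhonov spectral estimate gives $\n{u^+_\alpha - u_*} \leq c_p \rho\, \alpha^{p/2}$ for $0 < p \leq 2$. Combined with (\ref{err-est}) this yields $e_1(\alpha) \leq c_p\rho\,\alpha^{p/2} + \tfrac{1}{2}\alpha^{-1/2}\n{f-f_*}$, whose minimum in $\alpha$ is of order $\rho^{1/(p+1)}\n{f-f_*}^{p/(p+1)}$, attained at $\alpha_{opt} \sim (\n{f-f_*}/\rho)^{2/(p+1)}$. The hypothesis $\delta_0 \leq \n{f-f_*}$ (together with $\alpha_0 = 1$) guarantees $\alpha_M \leq \alpha_{opt} \leq \alpha_0$, so the restricted minimum $\min_{[\alpha_M,\alpha_0]} e_1(\alpha)$ is of the same order. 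Applying (\ref{eq6}) with $C \leq 1 + 2c_q\ln(1/\delta_0) = 1 + 2c_q[\ln(\n{f-f_*}/\delta_0) + \a{\ln\n{f-f_*}}]$ and bounding the sum of these two nonnegative logarithms by a constant multiple of their product gives (\ref{eq7}).

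I expect the main obstacle to be the logarithmic bookkeeping for $C$. The telescoping via Lemma 2 is cleanest when $\psi_Q$ is strictly unimodal on each basin, and this unimodality must be justified carefully from the precise discrete definition of local extrema that allows plateaus on one side. A secondary subtlety is the passage from the additive combination $\ln(\n{f-f_*}/\delta_0) + \a{\ln\n{f-f_*}}$ in the estimate of $C$ to the product form stated in (\ref{eq7}), which requires a short case analysis based on the magnitudes of the two logarithms in the regime of interest.
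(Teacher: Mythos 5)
Your proposal is correct and follows essentially the same route as the paper's proof: the decomposition $\n{u_{\alpha^{(k)}_{min}}-u_*}\leq(1+T)e_1(\alpha_j)$ via Lemma~1, the factor $q^{-1}$ from passing to the grid, the telescoping of $\n{u_{\alpha_i}-u_{q\alpha_i}}$ via Lemma~2 combined with monotonicity of $\psi_Q$ between consecutive local extrema to bound $T$ by $c_q\ln(\alpha_0/\alpha_M)$, and the standard source-condition bound plus the sum-to-product logarithm manipulation for the second assertion. The only point the paper makes explicitly that you leave implicit is the symmetric case $\alpha^{(k)}_{max}\leq\alpha_j\leq\alpha^{(k)}_{min}$, which is handled by the same telescoping argument.
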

\begin{proof}
For arbitrary parameters $\alpha \geq 0, \quad \beta \geq 0$ the inequalities
$$\n{u_{\alpha}-u_*} \leq \n{u_{\alpha}-u_{\beta}} + \n{u_{\beta}-u_*} \leq  
T(\alpha, \beta)\psi_Q(\beta)+e_1(\beta)$$
and (\ref{eq3}) lead to the estimate
\begin{equation} \label {eq4}
\n{u_{\alpha}-u_*} \leq \s{1+T(\alpha, \beta)}e_1(\beta).
\end{equation}
It is easy to see that 
\begin {equation}  \label {eq5}
\min_{\alpha_j \in \Omega} e_1(\alpha_{j}) \leq q^{-1}\min_{\alpha_M \leq \alpha \leq \alpha_0} e_1(\alpha),
\end {equation}
while in case $ q\alpha \leq \alpha' \leq \alpha$ we have $e_1\s{\alpha'} \leq  q^{-1} e_1\s{\alpha}$. \\
Let ${\alpha}_{j*}=\alpha_0 q^{j*}$ be the global minimum point of the function $e_1(\alpha)$ on the set of the parameters $\Omega$. 
Then ${\alpha}_{j*} \in [\alpha^{(k)}_{max},\alpha^{(k-1)}_{max}] $ for some $k, 1  \leq k \leq K$. Denote  $u_j=u_{{\alpha}_{j}}$ and $u_{kmin}=u_{{\alpha}^{(k)}_{min}}$. Then using  (\ref{eq4})  we can estimate
\begin {eqnarray*}
\n{u_{kmin}-u_*} \leq \s{1+T(\alpha^{(k)}_{min}, \alpha_{j*})}e_1(\alpha_{j*}) \leq \\ \s{1+\min_{\alpha^{(k)}_{max} \leq \alpha_j \leq \alpha^{(k-1)}_{max}}T(\alpha^{(k)}_{min}, \alpha_{j})} \min_{\alpha_j \in \Omega} e_1(\alpha_{j}).
\end {eqnarray*}
Since we do not know to which interval $[\alpha^{(k)}_{max},\alpha^{(k-1)}_{max}] $ the parameter ${\alpha}_{j*}$ belongs, we take maximum of $T$ over all intervals, $ 1 \leq k \leq K$. Using also (\ref{eq5}) we obtain the estimate (\ref{eq6}). \\
Now we show that  $C \leq 1+c_q\ln \s{\frac{\alpha_0}{\alpha_M}}$. At first we estimate $T(\alpha^{(k)}_{min}, \alpha_{j})$ in the case if  $\alpha^{(k)}_{min} \leq \alpha_{j} \leq \alpha^{(k-1)}_{max}$. Then Lemma 2 enables to estimate 
\begin {eqnarray*}
\n{u_{kmin}-u_j} \leq \Sigma_{j \leq i \leq kmin-1}\n{u_{i}-u_{i+1}} \leq q^{-1} (1-q) \Sigma_{j \leq i \leq kmin-1} \psi_Q(\alpha_{i+1})
\end {eqnarray*}
and
\begin {eqnarray*}
T(\alpha^{(k)}_{min}, \alpha_{j}) =\frac{\n{u_{kmin}-u_j}}{\psi_Q(\alpha_{j})} \leq 
 q^{-1} (1-q)\Sigma_{j \leq i \leq kmin-1} \frac{\psi_Q(\alpha_{i+1})}{\psi_Q(\alpha_{j})} \leq \\ (q^{-1}-1)(kmin-j) \leq  (q^{-1}-1) M= \frac{(q^{-1}-1)}{\ln{q^{-1}}}\ln{\frac{\alpha_0}{\alpha_M}}=c_q\ln{\frac{\alpha_0}{\alpha_M}}.
\end {eqnarray*}
If $\alpha^{(k)}_{max} \leq \alpha_{j} \leq \alpha^{(k)}_{min}$, then analogous estimation of  $T(\alpha^{(k)}_{min}, \alpha_{j})$  gives the same result. \\
For source-like solution $u_0-u_*=\a{A}^pv$, $\n{v}\leq\rho$, $p>0$ the error estimate
\begin {eqnarray*}
\min_{\alpha_M \leq \alpha \leq \alpha_0} e_1(\alpha) \leq c_p {\rho}^{1/(p+1)}{\n{f-f_*}}^{p/(p+1)},  0 < p \leq 2
\end {eqnarray*}
is well-known (see \cite{EHN, VaiVer})
and the equality $\ln{\frac{\alpha_0}{\alpha_M}} = \ln{{\delta_0}^{-2}} = 2\ln{\frac{\n{f-f_*}}{\delta_0}} \a{\ln{\n{f-f_*}}}$ leads to the estimate (\ref{eq7}).
\end{proof}
The results of numerical experiments for local minimizers $\alpha \in L_{min}$ of the function
 $\psi_{Q}(\alpha)$ are given in the Table 2. For comparison the results of $\delta$-rules with $\delta=\n{f-f_*}$ are added to the columns 2-4. 
Columns 5 and 6 contain respectively the averages and maximums of error ratios $E$ for the best local minimizer $\alpha \in L_{min}$. The results show that the Tikhonov approximation with the best local minimizer $\alpha \in L_{min}$ is even more accurate than with the best $\delta$-rule parameter $\alpha_\text{MEe}$. Columns 7 and 8 contain the averages and maximums of cardinalities $|L_{min}|$ of sets 
$L_{min}$ (number of elements of these sets). Note that number of local minimizers depends on parameter
$q$ (for smaller $q$ the number of local minimizers is smaller) and on length of minimization interval determined by the parameter $\alpha_M$. The number of local minimizers is smaller also for larger noise size. 
Columns 9 and 10 contain the averages and maximums of values of constant
$C$ in the a posteriori error estimate (\ref{eq6}). The value of $C$ and error estimate (\ref{eq6}) allow to assert, that in test problems \cite{Ha94} the choice of $\alpha$ as the best local minimizer in $L_{min}$ guarantees that error of the Tikhonov approximation has the same order as $\min_{\alpha_M \leq \alpha \leq \alpha_0} e_1(\alpha)$. Note that average and maximum of error ratio
$E1=\n{u_{\alpha_R}-u_*}/\min_{\alpha \in \Omega} e_1(\alpha)$ for the best local minimizer
$\alpha_R$ over all problems were 0.84 and 1.39 (for the MEe-rule corresponding error ratios were 0.85 and 1.69). 
\begin{table}
\caption{Results about the set $L_{min}$, $p=0$}
\label{tab:1}       
\addtolength\tabcolsep{4.6pt}%
\begin{tabular}{lccc|cc|cc|cc}
\hline\noalign{\smallskip}
Problem & ME & MEe & DP  & \multicolumn{2}{c|}{Best of $L_{min}$}  &\multicolumn{2}{c|}{ $|L_{min}|$} &\multicolumn{2}{c}{ Apost.  $C$ }\\
 & Aver E & Aver E & Aver E  & Aver E & Max E  & Aver & Max  & Aver & Max\\
\noalign{\smallskip}\svhline\noalign{\smallskip}

Baart & 1.43  & 1.32 & 1.37 & 1.23 & 2.51 & 6.91 & 8 & 3.19 & 3.72 \\
Deriv2 & 1.09 &  1.08 & 1.28 & 1.08 & 1.34 & 2.00 & 2 & 3.54 & 4.49 \\
Foxgood & 1.98 &  1.42 & 1.34 & 1.47 & 6.19 & 3.63 & 6 & 3.72 & 4.16 \\
Gravity & 1.40  &  1.13 & 1.16 & 1.13 & 1.83 & 1.64 & 3 & 3.71 & 4.15 \\
Heat & 1.19 & 1.03 & 1.05 & 1.12 & 2.36 & 3.19 & 5 & 3.92 & 4.50 \\
Ilaplace & 1.33 & 1.21 & 1.26 & 1.20 & 2.56 & 2.64 & 5 & 4.84 & 6.60 \\
Phillips & 1.27 & 1.02 & 1.02 & 1.06 & 1.72 & 2.14 & 3 & 3.99 & 4.66 \\
Shaw & 1.37 & 1.24 & 1.28 & 1.19 & 2.15 & 4.68 & 7 & 3.48 & 4.43 \\
Spikes & 1.01  & 1.00 & 1.01 & 1.00 & 1.02 & 8.83 & 10 & 3.27 & 3.70 \\
Wing & 1.16 & 1.13 & 1.15 & 1.09 & 1.38 & 5.20 & 6 & 3.07 & 3.72 \\
\noalign{\smallskip}\hline\noalign{\smallskip}
Total  & 1.32 & 1.16 & 1.19 & 1.16 & 6.19 & 4.09 & 10 & 3.67 & 6.60 \\
\noalign{\smallskip}\hline\noalign{\smallskip}
\end{tabular}
\end{table}

\section {Restricted set of the local minimizers of the function  $\psi_{Q}(\alpha)$}

We will restrict the set $L_{min}$ using two phases. In the first phase we remove from $L_{min}$ local minimizers in interval, where the function $\n{B_{\alpha}\s{Au_{\alpha}-f}}$ decreases only a little bit.
On the second phase we remove from set obtained on the first phase these local minimizers for which the function
  $\psi_{Q}(\alpha)$ for decreasing $\alpha$-values has only small growth before the next decrease.\\

 1. Denote $\delta_M:=\n{B_{\alpha_M}\s{Au_{\alpha_M}-f}}$ and by $\alpha=\alpha_{MD}$ the parameter for which $\n{B_{\alpha}\s{Au_{\alpha}-f}}=b\delta_M,  \quad b > 1$. Denote $\alpha_{MDQ}:=\min{\s{\alpha_{MD}, \alpha_{Q}}}$, where $\alpha_{Q} \in L_{ min}$ is the global minimizer of the function  $\psi_{Q}(\alpha)$ on the set $\Omega$. Let $ \alpha^{(k_0)}_{max} \leq \alpha_{MDQ} < \alpha^{(k_0-1)}_{max}$ for some $k_0, 1 \leq k_0 \leq K$. 
Then the set of local minimizers what we obtain on the first phase of restriction, has the form 
$L^0_{min}=\left\{\alpha^{(k)}_{min}: 1 \leq k  \leq k_0 \right\}$. In the case  $ \alpha^{(k0)}_{max} \leq \alpha_{MDQ} \leq \alpha^{(k0)}_{min}$ we change denotation to  $ \alpha^{(k0)}_{max}:=\alpha^{(k0)}_{min}$. \\

2. We remove from the set $L^0_{min}$ these local minimizers $\alpha^{(k)}_{min}$ and following maximizers $\alpha^{(k)}_{max}$, which satisfy the following conditions:

$$\alpha^{(k)}_{min} \not=\alpha^{(k)}_{max}; \qquad
\frac{\psi_{Q}(\alpha^{(k)}_{max})}{\psi_{Q}(\alpha^{(k)}_{min})} \leq c_0; \qquad
\frac{\psi_{Q}(\alpha^{(k)}_{min})}{\min_{j \leq k} {\psi_{Q}(\alpha^{(j)}_{min})}} \leq c_0 ,$$
where $c_0>1$ is some constant.  We denote by   $$L^*_{min}:=\left\{\overline \alpha^{(k)}_{min}:  \overline\alpha^{(1)}_{min} > \overline\alpha^{(2)}_{min} >... >\overline\alpha^{(k_*)}_{min}\right\}$$  the set of minimizers remained in $L^0_{min}$ and denote the remained maximizers by $\overline \alpha^{(k)}_{max}: \overline\alpha^{(0)}_{max} > \overline\alpha^{(1)}_{min} >... >\overline\alpha^{(k_*)}_{max}$. 
According to this algorithm the following inequalities hold:
\[
\overline\alpha^{(0)}_{max} \geq \overline\alpha^{(1)}_{min} >\overline\alpha^{(1)}_{max}>... >\overline\alpha^{(k_*-1)}_{max}> \overline\alpha^{(k_*)}_{min} \geq \overline\alpha^{(k_*)}_{max}.
\]
Note that if $\alpha_M$ is the global minimizer of the function  $\psi_{Q}(\alpha)$ then $\alpha_M \in L^*_{min}$. But in case $\alpha_{MD}< \alpha_{Q}$ the global minimizer of the function  $\psi_{Q}(\alpha)$ may not belong to the set $L^*_{min}$.
For the restricted set of local minimizers the following theorem hold.
\begin{theorem}
The following estimates hold for the local minimum points of the set $L^*_{min}$:
\begin{enumerate}
\item \begin{equation} \label{eq8}
 \min_{\alpha \in L^*_{min}} \n{u_{\alpha}-u_*} \leq \max {\left\{q^{-1}C_1 \min_{\alpha_M \leq \alpha \leq \alpha_0} e_1(\alpha), C_2(b) \min_{\alpha_M \leq \alpha \leq \alpha_0} e_2(\alpha, \delta_*)\right\}} ,   
\end{equation}
where 
\begin{equation}  \label{eq9}
 C_1:=1+\max_{1 \leq k \leq k_*} \max_{\alpha_j  \in \Omega, \overline\alpha^{(k)}_{max} \leq \alpha_j \leq \overline\alpha^{(k-1)}_{max}} T\s{\overline\alpha^{(k)}_{min}, \alpha_j} \leq 1+c_0 c_q\ln \s{\frac{\alpha_0}{\overline \alpha^{(k_*)}_{max}}}
\end{equation}
and $\delta_*=\max{\s{\delta_M,\n{f-f_*}}}$, $C_2(b)=b+2$.\\
\item Let $u_*=\a{A}^pv$, $\n{v}\leq\rho$, $p>0$, $\alpha_0=1$.  If $\delta_0:=\sqrt \alpha_M \leq \n{f-f_*}$, then 
\begin{equation} \label{eq10}
\min_{\alpha \in L^*_{min}} \n{u_{\alpha}-u_*} \leq   c_0 c_p\ln \frac{\n{f-f_*}}{\delta_0} \rho^\frac1{p+1}\a{\ln{\n{f-f_*}}}\n{f-f_*}^\frac p{p+1}, 0<p\leq 2.
\end{equation}
\end{enumerate}
\end{theorem}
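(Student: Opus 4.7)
The plan is to adapt the proof of Theorem 1 with a case split on the location of the $e_1$-optimal mesh parameter $\alpha_{j*} := \arg\min_{\alpha_j \in \Omega,\ \alpha_M \leq \alpha_j \leq \alpha_0} e_1(\alpha_j)$. In Case A, $\alpha_{j*} \in [\overline\alpha^{(k_*)}_{max}, \overline\alpha^{(0)}_{max}]$, so it lies in some sub-interval $[\overline\alpha^{(k)}_{max}, \overline\alpha^{(k-1)}_{max}]$. Applying (\ref{eq4}) and (\ref{eq5}) exactly as in Theorem 1 yields the first branch $q^{-1} C_1 \min e_1$ of the max in (\ref{eq8}). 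The refined bound (\ref{eq9}) on $C_1$ requires updating the telescoping step: after phase 2 a retained interval may contain removed min--max pairs, so $\psi_Q$ is no longer monotone on it, but the two removal conditions combine to force each removed maximum $\alpha^{(j)}_{max}$ in the interval to satisfy $\psi_Q(\alpha^{(j)}_{max}) \leq c_0\, \psi_Q(\alpha^{(j)}_{min}) \leq c_0\, \psi_Q(\overline\alpha^{(k)}_{min}) \leq c_0\, \psi_Q(\alpha_j)$. Each ratio in the telescoping sum is therefore bounded by $c_0$ rather than $1$, producing the extra factor $c_0$ in (\ref{eq9}); the number of mesh steps is $\leq \log_{q^{-1}}(\alpha_0/\overline\alpha^{(k_*)}_{max})$ since $\overline\alpha^{(k)}_{min} \geq \overline\alpha^{(k_*)}_{max}$ throughout the relevant range.

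In Case B, $\alpha_{j*} < \overline\alpha^{(k_*)}_{max}$, meaning the $e_1$-optimum was discarded by phase 1 (or by phase 2 together with its sub-interval). The telescoping route fails, so I appeal instead to the modified-discrepancy character of $\alpha_{MDQ}$. The smallest retained minimizer $\overline\alpha^{(k_*)}_{min}$ lies at or just above the cutoff $\alpha_{MDQ}$ and below $\alpha_{MD}$, so by monotonicity of $\alpha \mapsto \n{B_\alpha\s{Au_\alpha - f}}$ in $\alpha$ one has $\n{B_{\overline\alpha^{(k_*)}_{min}}\s{Au_{\overline\alpha^{(k_*)}_{min}} - f}} \leq b\delta_M \leq b\delta_*$. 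The classical proof of weak quasi-optimality for the Raus--Gfrerer rule---bounding $\n{u_\alpha - u_*} \leq \n{u^+_\alpha - u_*} + \n{u_\alpha - u^+_\alpha}$ via (\ref{err-est}) and estimating the bias term through the discrepancy---then yields $\n{u_{\overline\alpha^{(k_*)}_{min}} - u_*} \leq (b+2)\inf_{\alpha_M \leq \alpha \leq \alpha_0} e_2(\alpha, \delta_*)$, which is the second branch of the max with $C_2(b) = b+2$. Part 2 then follows by substituting the source-condition estimate $\inf_{\alpha_M \leq \alpha \leq \alpha_0} e_j(\alpha, \n{f-f_*}) \leq c_p \rho^{1/(p+1)}\n{f-f_*}^{p/(p+1)}$ for $j = 1, 2$ (valid for $e_2$ with $\delta_* \asymp \n{f-f_*}$) into (\ref{eq8}) and bounding the log in (\ref{eq9}) by $\ln(\alpha_0/\alpha_M) = 2\ln(\n{f-f_*}/\delta_0)\,\a{\ln\n{f-f_*}}$ exactly as at the end of the proof of Theorem 1; the factor $c_0$ then propagates multiplicatively into (\ref{eq10}).

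The main obstacle is Case B: one must pin down a \emph{specific} element of $L^*_{min}$ that inherits the weak quasi-optimality constant $C_2(b) = b+2$, rather than merely an abstract $\alpha \leq \alpha_{MDQ}$. This hinges on careful book-keeping of how phases 1 and 2 act near the cutoff $\alpha_{MDQ}$, ensuring that the chosen $\overline\alpha^{(k_*)}_{min}$ (or occasionally the next retained minimizer, should phase 2 absorb the natural candidate via the $\alpha^{(k_0)}_{max} := \alpha^{(k_0)}_{min}$ convention) is neither much larger than $\alpha_{MD}$ nor much smaller than $\alpha_{MDQ}$. Any slack introduced by the mesh factor $q^{-1}$ or by the $c_0$-band of phase 2 is absorbed into the constant $C_2(b) = b+2$.
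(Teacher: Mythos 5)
Your overall strategy matches the paper's: a case split on the location of the $e_1$-optimal mesh point $\alpha_{j*}$, the telescoping bound of Theorem~1 modified by the factor $c_0$ on the retained intervals (giving (\ref{eq9})), the weak quasi-optimality of the modified discrepancy principle for the discarded range, and the same substitution of the source-condition rate for Part~2. Your Case~A is essentially the paper's case~a) and is fine.

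The genuine gap is in your Case~B, which you handle with a single appeal to ``the classical proof of weak quasi-optimality for the Raus--Gfrerer rule.'' That argument yields the constant $C_2(b)=b+2$ only under the additional hypothesis $\alpha_*\leq\overline\alpha$, where $\alpha_*$ is the minimizer of $e_2(\cdot,\delta_*)$ and $\overline\alpha:=\overline\alpha^{(k_*)}_{min}$: the upper discrepancy bound $\n{B_{\overline\alpha}\s{Au_{\overline\alpha}-f}}\leq b\delta_*$ controls only the bias term, and gives no control whatsoever of the noise-propagation term $\n{u_{\overline\alpha}-u^+_{\overline\alpha}}$, which for $\overline\alpha<\alpha_*$ can exceed $0.5\alpha_*^{-1/2}\delta_*\leq\min_\alpha e_2(\alpha,\delta_*)$ by the unbounded factor $\sqrt{\alpha_*/\overline\alpha}$. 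The paper therefore splits your Case~B into two sub-cases: c) if $\alpha_*\leq\overline\alpha$, your argument applies and gives the second branch of the max; b) if $\alpha_{j*}\leq\overline\alpha\leq\alpha_*$, one uses instead that $\n{u^+_\alpha-u_*}$ is nondecreasing in $\alpha$ and the noise bound $0.5\alpha^{-1/2}\n{f-f_*}$ is nonincreasing, so that $\n{u_{\overline\alpha}-u_*}\leq\n{u^+_{\alpha_*}-u_*}+0.5\alpha_{j*}^{-1/2}\n{f-f_*}\leq\min_{\alpha\in\Omega}e_1(\alpha)+\min_\alpha e_2(\alpha,\delta_*)$, which is then absorbed into the second branch since $C_2(b)=b+2>2$. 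Your closing remark that ``any slack is absorbed into the constant'' does not cover this: the discrepancy between $\overline\alpha$ and $\alpha_*$ is not a mesh- or $c_0$-sized slack but is a priori unbounded, so the intermediate sub-case needs its own (monotonicity) argument. A secondary, minor point: in your telescoping chain the step $\psi_Q(\alpha^{(j)}_{min})\leq c_0\,\psi_Q(\overline\alpha^{(k)}_{min})\leq c_0\,\psi_Q(\alpha_j)$ is not what the removal conditions give (they compare $\psi_Q(\alpha^{(j)}_{min})$ with $\min_{i\leq j}\psi_Q(\alpha^{(i)}_{min})$); the bound actually used is $\sum_i\psi_Q(\alpha_{i+1})/\psi_Q(\alpha_j)\leq c_0 M$ over the mesh points between $\overline\alpha^{(k)}_{min}$ and $\alpha_j$, which delivers the same factor $c_0$ in (\ref{eq9}).
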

\begin{proof}
Due to the inequality $\delta_* \geq \n{f-f_*}$ the global minimizer of the function
$e_2(\alpha,\delta_*)$ is greater or equal to the global minimizer of the function $e_1(\alpha)$. 
Denote $\overline \alpha:=\overline\alpha^{(k_*)}_{min}$, let $\alpha_*$ be the global minimizer of the function $e_2(\alpha,\delta_*)$ and $\alpha_j*$ be the global minimizer of the function  $e_1(\alpha)$ on the set $\Omega$. We consider separately the cases a) $\alpha_j* \geq \overline\alpha$, \quad b) $\alpha_j* \leq \overline\alpha \leq \alpha_*$, \quad c)  $\alpha_* \leq \overline\alpha$. \\
In the case a) we get the estimate 
\begin{equation} \label{eq14}
\min_{\alpha \in L^*_{min}} \n{u_{\alpha}-u_*} \leq q^{-1}C_1 \min_{\alpha_M \leq \alpha \leq \alpha_0} e_1(\alpha)
\end{equation}
analogically to the proof of Theorem 1, but use for the estimation of $T(\alpha^{(k)}_{min}, \alpha_{j})$ the inequality $\Sigma_{j \leq i \leq kmin-1} \frac{\psi_Q(\alpha_{i+1})}{\psi_Q(\alpha_{j})} \leq c_0 M$. \\
In the case b) we estimate 
\begin{equation} \label{eq15}
\n{u_{\overline\alpha}-u_*} \leq \n{u^+_{\alpha_*}-u_*}+0.5{\alpha_j*}^{-1/2}\n{f-f_*} \leq \min_{\alpha \in \Omega} e_1(\alpha) + \min_{\alpha} e_2(\alpha,\delta_*).
\end{equation}
In the case c) we have $\overline\alpha \leq \alpha_{MD}$ and therefore also $\n{B_{\overline\alpha}\s{Au_{\overline\alpha}-f}} \leq b\delta_M  \leq b\delta_*$.
Now we can prove analogically to the proof of the weak quasioptimality of the modified discrepancy principle 
(\cite{RH07}) that under assumption  $\alpha_* \leq \overline\alpha$ the error estimate
\begin{equation}  \label{eq16}
\n{u_{\overline\alpha}-u_*} \leq  C_2(b) \min_{\alpha_M \leq \alpha \leq \alpha_0} e_2(\alpha, \delta_*)
\end{equation}
holds. Now the assertion 1 of Theorem 2 follows from the inequalities
(\ref{eq14}-\ref{eq16}). The proof of assertion 2 is analogical to the proof of Theorem 1.
\end{proof}
We recommend to choose the constant
$b$ from the interval $[1.5;2]$ and coefficient $c_0$ from the interval $[1.5;3]$. In all following numerical examples $b=c_0=2$. The numerical experiments show that the set $L^*_{min}$ contains in many test problems only one local minimizer and this is a good regularization parameter.  In the Table 3 for the test problems \cite{Ha94} the results are given for the set $L^*_{min}$. The columns 2-7 contain the averages and maximums of the error ratio $E$ for the best parameter from the set $L^*_{min}$, the average and maximum of numbers $|L^*_{min}|$ of elements of $L^*_{min}$ and averages and maximums of the constants $C_1$ in the error estimate. The last column of the table contains \% of cases, where the set $L^*_{min}$ contained only one element or two elements one of which was
$\alpha_M$. Tables 2, 3 show that for the best parameter from the set 
$L^*_{min}$ the error ratio $E$ is smaller than for parameter from the ME-rule. Table 3 shows also that in test problems
\emph{foxgood, gravity} ja \emph{ilaplace} the set $L^*_{min}$ contains only one element and this a good parameter. Due to small values of $C_1$ the chosen parameter is pseudooptimal.
Note that average and maximum of the error ratio
$E1$ for the best local minimizer $\alpha_R$ from $L^*_{min}$ over all problems were 0.88 and 1.61 respectively.
\begin{table}
\caption{Results about the set $L^*_{min}$, $p=0$}
\label{tab:1}       
\addtolength\tabcolsep{8.0pt}%
\begin{tabular}{l|cc|cc|cc|c}
\hline\noalign{\smallskip}
Problem & \multicolumn{2}{c|}{Best of $L^*_{min}$}  &\multicolumn{2}{c|}{ $|L^*_{min}|$} &\multicolumn{2}{c|}{ Apost.  $C_1$ } & \%\\
 & Aver E & Max E  & Aver & Max  & Aver & Max\\
\noalign{\smallskip}\svhline\noalign{\smallskip}

Baart & 1.40 & 2.91 & 1.41 & 3 & 6.38 & 7.93 & 60.8 \\
Deriv2 & 1.08 & 1.34 & 2.00 & 2 & 3.54 & 4.49 & 100 \\
Foxgood & 1.57 & 6.69 & 1,00 & 1 & 4.39 & 4.92 & 100 \\
Gravity & 1.14 & 2.15 & 1.00 & 1 & 3.02 & 3.95  & 100 \\
Heat & 1.12 & 2.36 & 2.05 & 3 & 5.08 & 5.38 & 0 \\
Ilaplace & 1.23 & 2.56 & 1.00 &1 & 4.68 & 6.68 & 100 \\
Phillips & 1.06 & 1.72 & 2.10 & 3 & 3.97 & 4.66 & 90.0 \\
Shaw & 1.39 & 3.11 & 1.16 & 2 & 5.89 & 8.06 & 84.2 \\
Spikes & 1.01 & 1.03 & 1.64 & 3 & 10.07 & 11.82 & 55.0 \\
Wing & 1.30 & 1.84 & 2.18 & 4 & 3.03 & 6.63 & 1.7 \\
\noalign{\smallskip}\hline\noalign{\smallskip}
Total  & 1.23 & 6.69 & 1.55 & 4 & 5.01 & 11.82 & 69.2 \\
\noalign{\smallskip}\hline\noalign{\smallskip}
\end{tabular}
\end{table}

\section {Choice of the regularization parameter from the set $L^*_{min}$}

Now we give algorithm for choice of the regularization parameter from the set
$L^*_{min}$.\\
1. If the set $L^*_{min}$ contains only one parameter, we take this for the regularization parameter. On the base of Theorem 2 we know (we can compute also the a posteriori coefficient $C_1$), that this parameter is reliable.\\ 
2. If the set $L^*_{min}$  contains two parameters one of which is  $\alpha_M$, we take for the regularization parameter another parameter $\alpha \not= \alpha_M$. This parameter is good under the assumption that this problem needs regularization.\\
3. If the set $L^*_{min}$ contains after possible elimination of  $\alpha_M$ more than one parameter, we may use for parameter choice the following algorithms. \\
a) Let $\alpha_Q$, $\alpha_{HR}$  be global minimizers of the functions $\psi_{Q}(\alpha)$, $\psi_{HR}(\alpha)$ respectively on the interval $[\max{\s{\alpha_M,\lambda_{min}}},\alpha_0]$. Let  $\alpha_{Q1}:=\max{\s{\alpha_Q, \alpha_{HR}}}$. Choose from the set $L^*_{min}$ the largest parameter $\alpha$, which is smaller or equal to $\alpha_{Q1}$. \\
b)  Let $\alpha_{RE}$ be the global minimizer of the function $\psi_{RE}(\alpha)$ on the interval\\ $[\max{\s{\alpha_M,\lambda_{min}}},\alpha_0]$.  Let  $\alpha_{Q2}$ be the global minimizer of the function $\psi_{Q}(\alpha)$ on the interval $[\alpha_{RE},\alpha_0]$. 
Choose from the set $L^*_{min}$ the largest parameter $\alpha$, which is smaller or equal to $\alpha_{Q2}$. \\
c) For the parameters from
$L^*_{min}$ we compute value $R(\alpha)=\frac{\psi_{HR}(\alpha)}{\n{u_{\alpha}}}$ which we consider as the rough estimate for the relative error 
$\frac{\n{u_{\alpha}-u_*}}{\n{u_*}}$ under assumption that parameter $\alpha$ is near to the optimal parameter.
We choose for the regularization parameter the smallest parameter 
$\alpha_*$ from the set $L^*_{min}$, which satisfies the condition $R(\alpha_*) \leq  C^* \min_{\alpha \in L^*_{min}, \alpha>\alpha_*} R(\alpha)$. We recommend to choose the constant $C^*$ from the interval $5 \leq C^* \leq 10$.  In the numerical experiments we used $C^*=5$. \\
The results of the numerical experiments for different algorithms for the parameter choice are given in the 
Table 4. The results for all 3 algorithms are very similar and the average of the error ratio is even smaller than for $\alpha$ from the ME-rule.
In the case if the set $L^*_{min}$ contained more than 3 parameters, in 68.1 \% of cases all 3 algorithms gave the same parameter and in 92.7 \% of cases the parameters from algorithms b) ja c) coincided. We changed also the parameters $b \in [1.5;2]$ and  $c_0 \in [1.5;3]$, but the overall average of the ratio E changed less than 2 \%. 

\begin{table}
\caption{Averages and maximums of error ratios E in case of different heuristic algorithm, $p=0$}
\label{tab:1}       
%
%
\addtolength\tabcolsep{9.8pt}%
\begin{tabular}{l|cc|cc|cc}
\hline\noalign{\smallskip}
Problem  & \multicolumn{2}{c|}{Algorithm  a)} &\multicolumn{2}{c|}{ Algorithm b)} & \multicolumn{2}{c}{Algorithm c)} \\
 & Aver E & Max E  & Aver E & Max E & Aver E & Max E\\
\noalign{\smallskip}\svhline\noalign{\smallskip}

Baart & 1.83 & 3.63 & 1.61 & 2.91  & 1.61 & 2.91 \\
Deriv2 & 1.08 & 1.34 & 1.08 & 1.34  & 1.08 & 1.34 \\
Foxgood  & 1,57 & 6.69 & 1.57 & 6.69  & 1.57 & 6.69 \\
Gravity & 1.14 & 2.15 & 1.14 & 2.15 & 1.14 & 2.15 \\
Heat & 1.12 & 2.36 & 1.12 & 2.36 & 1.12 & 2.36  \\
Ilaplace & 1.23 & 2.56 & 1.23 & 2.56 & 1.23 & 2.56\\
Phillips & 1.06 & 1.72 & 1.06 & 1.72 & 1.06 & 1.72 \\
Shaw & 1.48 & 3.64 & 1.45 & 3.64 & 1.45 & 3.64\\
Spikes & 1.01 & 1.03 & 1.01 & 1.03 & 1.01 & 1.03 \\
Wing & 1.50 & 1.86 & 1.38 & 2.04 & 1.32 & 1.84\\
\noalign{\smallskip}\hline\noalign{\smallskip}
Total  & 1.30 & 6.69 & 1.26 & 6.69 & 1.26 & 6.69 \\
\noalign{\smallskip}\hline\noalign{\smallskip}
\end{tabular}
\end{table}
The proposed algorithms for parameter choice are complicated (formation of the set 
$L^*_{min}$) but they enable to estimate also the reliability of the chosen parameter and propose alternative parameters if the set 
 $L^*_{min}$ contains several local minimizers. If some information about solution or noise is available, it may help to find from the set
$L^*_{min}$  better parameter than algorithms a)-c) find. If the purpose is only parameter choice, simpler rules below may be used (parameters $\alpha_{Q1}$ and $\alpha_{Q2}$ are defined in algorithm a), b)). \\ 
1. 
We choose for the regularization parameter the smallest local minimizer $\alpha^{(k_*)}_{min}$ of the function
$\psi_{Q}(\alpha)$ which satisfies the following conditions: 
\be{c1}
\frac{\psi_{Q}(\alpha^{(k)}_{max})}{\psi_{Q}(\alpha^{(k)}_{min})} \leq c_0, \qquad k=k_0,k_0+1,...,k_*-1; 
\ee
\be{c2}
\frac{\psi_{Q}(\alpha^{(k)}_{min})}{\min_{j \leq k} {\psi_{Q}(\alpha^{(j)}_{min})}} \leq c_0 , \qquad k=k_0,k_0+1,...,k_* ,
\ee
where $k_0$ is the index for which
$\alpha^{(k_0)}_{min} \leq \alpha_{Q1} \leq  \alpha^{(k_0-1)}_{max} $.\\
2.  
We choose for the regularization parameter the smallest local minimizer $\alpha^{(k_*)}_{min}$ of the function $\psi_{Q}(\alpha)$ satisfying conditions (\ref{c1}), (\ref{c2}) where  $k_0$ is index for which $\alpha^{(k_0)}_{min} \leq \alpha_{Q2} \leq  \alpha^{(k_0-1)}_{max} $. \\
These rules give in test problems \cite{Ha94} the same results as the algorithms a) and b) respectively. 

The Table 5 gives results of the numerical experiments in the case of smooth solution,
 $p=2$. The table shows that in case of smooth solution the number of local minimizers in $L_{min}$ and number of elements $L^*_{min}$ are smaller than in case
$p=0$. If the set $L^*_{min}$ contains several elements, 
then the algorithms a) and c) gave the same parameter, which was always the best parameter from $L^*_{min}$ with smallest error. In case of algorithm b) the overall average of the ratio $E$ was 1.25. In all problems except 
the problem \emph{wing} the heuristical rule gave parameter where the average of error was smaller than by parameter from the ME-rule, and only 10 \% larger than by parameter from the MEe-rule (both ME-rule and the MEe rule used the exact noise level). 
\begin{table}
\caption{Results of the numerical experiments, $p=2$}
\label{tab:1}       
%
%
\addtolength\tabcolsep{2.8pt}%
\begin{tabular}{lc|c|c|c|c|c|c}
\hline\noalign{\smallskip}
Problem & ME & MEe  & Best of $L_{min}$ & $|L_{min}|$&Best of $L^*_{min}$ & $|L^*_{min}|$ & \%\\
 & Aver E & Aver E & Aver E  & Aver & Aver E & Aver \\
\noalign{\smallskip}\svhline\noalign{\smallskip}

Baart & 1.86  & 1.19 & 1.18 & 4.74 & 1.41 & 1.02 & 98.3\\
Deriv2 & 1.10 &  1.19 & 1.03 & 2.00 & 1.03 & 2.00 & 100\\
Foxgood & 1.56 &  1.13 & 1.14 & 2.08 & 1.20 & 1.00  & 100 \\
Gravity & 1.33  &  1.05 & 1.09 & 1.72 & 1.11 & 1.00  & 100\\
Heat & 1.13 & 1.12 & 1.05 & 2.10 & 1.05 & 2.10  & 0\\
Ilaplace & 1.47 & 1.06 & 1.11 & 2.73 & 1.11 & 1.00  & 100\\
Phillips & 1.26 & 1.06 & 1.04 & 2.10 & 1.04 & 2.10  & 90\\
Shaw & 1.37 & 1.06 & 1.11 & 3.72 & 1.22 & 1.01  & 99.2\\
Spikes & 1.85  & 1.12 & 1.19 & 4.78 & 1.31 & 1.00  & 100 \\
Wing & 1.67 & 1.14 & 1.22 & 4.53 & 1.73 & 1.01  & 99.2 \\
\noalign{\smallskip}\hline\noalign{\smallskip}
Total  & 1.46 & 1.11 & 1.12 & 3.05 & 1.22 & 1.32  & 88.7\\
\noalign{\smallskip}\hline\noalign{\smallskip}
\end{tabular}
\end{table}

We finish the paper with the following conclusion. For the heuristic choice of the regularization parameter we recommend to choose the parameter from the set of local minimizers of the function $\psi_Q(\alpha)$.
Proposed algorithm enables to restrict this set and in many problems the restricted set contains only one element, this parameter is the pseudooptimal parameter.

\begin{acknowledgement}
The authors are supported 
by institutional research funding IUT20-57
of the Estonian Ministry of Education and Research. 
 \end{acknowledgement}

\end{document}